\newcommand\reallywidehat[1]{%
\savestack{\tmpbox}{\stretchto{%
  \scaleto{%
    \scalerel*[\widthof{\ensuremath{#1}}]{\kern-.6pt\bigwedge\kern-.6pt}%
    {\rule[-\textheight/2]{1ex}{\textheight}}
  }{\textheight}%
}{0.5ex}}%
\stackon[1pt]{#1}{\tmpbox}%
}
\definecolor{myred}{rgb}{0.75,0,0}
\definecolor{mygreen}{rgb}{0,0.5,0}
\definecolor{myblue}{rgb}{0,0,0.65}
\theoremstyle{plain}
\newtheorem{theorem}[subsection]{Theorem}
\newtheorem{proposition}[subsection]{Proposition}
\newtheorem{lemma}[subsection]{Lemma}
\theoremstyle{definition}
\newtheorem{remark}[subsection]{Remark}
\newtheorem{question}[subsection]{Question}
\theoremstyle{remark}
\newtheorem{notation}[subsection]{Notation}
\numberwithin{equation}{section}
\newcommand\nc{\newcommand}
\nc\on{\operatorname}
\nc\renc{\renewcommand}
\newcommand*{\shom}{\mathscr{H}\kern -.5pt om}
\newcommand*{\stor}{\mathscr{T}\kern -.5pt or}
\newcommand*{\sext}{\mathscr{E}\kern -.5pt xt}
\providecommand\@dotsep{5}
\renewcommand{\listoftodos}[1][\@todonotes@todolistname]{%
\@starttoc{tdo}{#1}}
\newcommand{\customlabel}[2]{\protected@write \@auxout {}{\string \newlabel {#1}{{#2}{\thepage}{#2}{#1}{}} }\hypertarget{#1}{#2}}
\renewcommand\hom{\mathrm{Hom}}
\DeclareFontFamily{U}{wncy}{}
\DeclareFontShape{U}{wncy}{m}{n}{<->wncyr10}{}
\DeclareSymbolFont{mcy}{U}{wncy}{m}{n}
\DeclareMathSymbol{\Sha}{\mathord}{mcy}{"58}
\def\listtodoname{List of Todos}
\def\listoftodos{\@starttoc{tdo}\listtodoname}
\title{Prill's problem}
\author{Aaron Landesman and Daniel Litt}
\subjclass[2020]{
14D07
14H10
}
\keywords{Covers of curves, Hodge theory, generically globally generated vector
bundles, Hesse pencil, Prill's problem}
\begin{document}

\begin{abstract}
	We solve Prill's problem, originally posed by David Prill in the late
	1970s and popularized in ACGH's
``Geometry of Algebraic Curves." 
That is, for any curve $Y$ of genus $2$,	
we produce a finite \'etale degree $36$ connected cover $f: X \to Y$ where, for every point $y \in Y$, $f^{-1}(y)$ moves in a pencil.
\end{abstract}

\maketitle

\section{Introduction}

Throughout we work over the complex numbers. Let $f: X\to Y$ be a dominant map
of smooth projective connected algebraic curves, where the genus of $Y$, $g(Y),$ is at
least $2$. In this case, Riemann-Hurwitz yields $\deg(f)<g(X)$,
so one would not expect that $f^{-1}(y)$ moves in a pencil for a general $y\in Y$. 
That is, one expects $h^0(X, \mathscr{O}_X(f^{-1}(y)))=1$ for a general $y
\in Y$. 
Of course, there are many cases where $f^{-1}(y)$ moves in a pencil for special
$y \in Y$, but in the late 1970s, David Prill raised the following question:
\begin{question}[Prill's problem, ~\protect{\cite[p. 268, Chapter VI, Exercise
	D]{ACGH:I}}]
	\label{question:}
	Given any curve $Y$ of genus $g \geq 2$ and a finite covering
	$f: X \to Y$, does 
	$h^0(X, \mathscr{O}_X(f^{-1}(y)))=1$ for a general $y \in Y$?
\end{question}

Due to its elementary nature, Prill's problem garnered much attention
in the early 1980s. 
Various special cases of Prill's problem were answered affirmatively, such as those summarized in
\cite{ACGH:I, biswasB:on-prills-problem,kumar:some-remarks-on-prills-problem}.

We say a finite cover $f: X \to Y$ of smooth proper geometrically connected
curves is {\em Prill exceptional} if 
$h^0(X, \mathscr O_X(f^{-1}(y))) \geq 2$ for every point $y \in Y$.
The general belief, up until this point, was that 
no Prill exceptional covers
should exist. 
Our main result unexpectedly resolves Prill's problem 
by showing that Prill exceptional covers do, in fact, exist. 
Even more surprisingly,
our construction
gives a Prill exceptional cover
of {\em any} genus $2$ curve.

\begin{theorem}
	\label{theorem:prill-example}
	If $Y$ is any smooth proper connected curve of genus $2$ over the
	complex numbers, there is a
	finite \'etale cover $f: X \to Y$ which is Prill exceptional.
\end{theorem}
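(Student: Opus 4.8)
The plan is to translate Prill exceptionality into a statement about the pushforward bundle and then to exhibit an explicit cover satisfying it. Since $f$ is finite and \'etale, for every $y\in Y$ the projection formula gives $h^0(X,f^*\mathscr O_Y(y)) = h^0(Y,\mathscr O_Y(y)\otimes f_*\mathscr O_X)$. The trace map splits $f_*\mathscr O_X = \mathscr O_Y\oplus E$ with $E$ of rank $35$ and degree $0$, and because $Y$ has genus $2$ one has $h^0(Y,\mathscr O_Y(y))=1$ for every $y$. Hence $f$ is Prill exceptional if and only if $h^0(Y,E\otimes\mathscr O_Y(y))\ge 1$ for all $y\in Y$; the content is to force the nontrivial part $E$ to acquire a section after twisting by \emph{every} degree one point class, not merely for generic $y$.

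Next I would make $E$ explicit representation-theoretically. Realize $f$ from a surjection $\pi_1(Y)\twoheadrightarrow G$ onto a finite group together with a subgroup $H\le G$ of index $36$ (such surjections exist for every genus $2$ curve as soon as $G$ is generated by two elements). Then $f_*\mathscr O_X$ is the flat bundle attached to the permutation representation $\mathbb C[H\backslash G]$, so $E\cong\bigoplus_{V\ne\mathbf 1}\mathbb V^{\oplus\dim V^H}$, the sum over nontrivial irreducibles $V$ with $V^H\ne 0$ of the associated flat bundles $\mathbb V$ on $Y$. It therefore suffices to produce a single nontrivial irreducible $V$ with $V^H\ne 0$ whose flat bundle $\mathbb V$ satisfies $h^0(Y,\mathbb V\otimes\mathscr O_Y(y))\ge 1$ for every $y$.

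I would then recast this cohomological demand geometrically. Writing $W_1=\{\mathscr O_Y(y)\}\subset\pic^1(Y)$ for the Abel--Jacobi curve, which in genus $2$ is precisely the principal theta divisor $\Theta$, the condition becomes that the theta locus $\Theta_{\mathbb V}=\{M\in\pic^1(Y):h^0(\mathbb V\otimes M)>0\}$ contain $\Theta$. Since $\Theta_{\mathbb V}$ has class $(\dim V)\Theta$, and since for $\dim V=1$ the locus $\Theta_{\mathbb V}$ is an irreducible translate of $\Theta$ whose stabilizer in $\jac Y$ is trivial, no abelian (in particular no cyclic) cover can work; this explains the earlier partial results and forces $\dim V\ge 2$, where $\Theta_{\mathbb V}$ may be reducible and contain $\Theta$ as a component.

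The main obstacle is the last step: exhibiting a specific nonabelian $G$, subgroup $H$, and representation $V$ for which $\Theta\subseteq\Theta_{\mathbb V}$ holds uniformly for \emph{every} genus $2$ curve, a decidedly non-generic (non-)vanishing condition. Conceptually I would try to force the extra section by building a second effective divisor into the cover: a section of $\mathbb V\otimes\mathscr O_Y(y)$ beyond the tautological fiber amounts to a divisor $D_y\sim f^{-1}(y)$ with $D_y\ne f^{-1}(y)$, equivalently a second \'etale map $f'\colon X\to Y$ with $(f')^*=f^*$ on $\jac Y$. Realizing such a map over a \emph{generic} genus $2$ curve is exactly where the difficulty concentrates, since such a $Y$ has no automorphisms beyond the hyperelliptic involution; the natural remedy is to make the whole construction functorial in $Y$ --- built from the intrinsic symmetries of $J_Y$, such as its torsion and theta (Heisenberg) structure --- so that the requisite extra symmetry of $X$, and hence the second section, persists for all $Y$. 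Checking that the resulting rank $\ge 2$ flat summand indeed satisfies $\Theta\subseteq\Theta_{\mathbb V}$, together with the routine verifications that $X$ is connected and $\deg f=36$, would then complete the proof.
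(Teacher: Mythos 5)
Your reductions are sound as far as they go: the splitting $f_*\mathscr O_X=\mathscr O_Y\oplus E$, the equivalence of Prill exceptionality with $h^0(Y,E\otimes\mathscr O_Y(y))\ge 1$ for all $y$, the isotypic decomposition of $E$ into flat summands, and the observation that rank-one (hence all abelian) summands cannot work because the stabilizer of $\Theta$ in $\on{Jac}(Y)$ is trivial. These parallel \autoref{proposition:ggg-and-prill} of the paper (which phrases the same condition dually, as failure of generic global generation of $f_*\omega_X$) and the known partial results. But everything from ``The main obstacle'' onward is a research plan, not a proof: you never produce the group $G$, the subgroup $H$, or the cover, and you never verify $\Theta\subseteq\Theta_{\mathbb V}$ for anything. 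Moreover, the suggested mechanism is off: your claim that an extra section of $\mathbb V\otimes\mathscr O_Y(y)$ is \emph{equivalent} to a second \'etale map $f'\colon X\to Y$ with $(f')^*=f^*$ is false --- a second effective divisor in $|f^{-1}(y)|$ need not be a fiber of any map to $Y$ at all --- and in the paper's example no such second map or extra symmetry of $X$ is exhibited; the extra sections are detected variationally.

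The two ideas that actually close the gap are as follows. First (\autoref{proposition:bt}, following Bogomolov--Tschinkel): for any genus $2$ curve $Y$, compose the hyperelliptic quotient $Y\to\mathbb P^1$ (branched at the six Weierstrass points) with the elliptic double cover $E\to\mathbb P^1$ branched at four of them; the normalized fiber product is an \'etale double cover of $Y$ mapping to $E$, pulling back along $[\times 3]\colon E\to E$ gives a further degree $9$ \'etale cover, and finally one takes the double cover of $E/\{\pm 1\}\simeq\mathbb P^1$ branched at the images of the nonzero $3$-torsion points. The punchline is the Hesse pencil computation: those four branch points have cross-ratio independent of $E$, so the resulting genus $1$ curve $E_0$ is \emph{one fixed elliptic curve} as $Y$ varies; Abhyankar's lemma shows the total cover $X\to Y$ is \'etale of degree $36$, and $X$ dominates $E_0$. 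Second (\autoref{proposition:non-ggg}): the passage from this construction to Prill exceptionality is Hodge-theoretic. Running the construction over an \'etale cover $\mathscr M\to\mathscr M_2$ produces a versal family in which $\on{Pic}^0_{\mathscr X/\mathscr M}$ has the isotrivial isogeny factor $\mathscr E_0$; this forces the infinitesimal variation of Hodge structure map $\overline\nabla\colon\pi'_*\omega_{\mathscr X/\mathscr M}\to R^1\pi'_*\mathscr O_{\mathscr X}\otimes\Omega^1_{\mathscr M}$ to have nonzero kernel, and identifying its fiber at $[Y]$ with the trace pairing $\eta\mapsto\bigl(\nu\mapsto\on{tr}_{X/Y}(\eta\otimes\nu)\bigr)$ yields a nonzero $\eta\in H^0(Y,f_*\omega_X)$ whose associated map kills all global sections, i.e., $f_*\omega_X$ is not generically globally generated --- exactly the Serre dual of your condition. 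Without these two inputs --- a construction uniform in $Y$, and a variational argument replacing the direct verification of $\Theta\subseteq\Theta_{\mathbb V}$ --- your outline does not constitute a proof.
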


The idea of the proof of \autoref{theorem:prill-example} is to relate Prill's problem to a problem in Hodge theory. Inspired by
recent work of Markovi\'c \cite{markovic}, we employ a construction of Bogomolov and
Tschinkel \cite{BT:curve-correspondences, BT:updated-curve-correspondences}.
This yields a finite \'etale
cover $f: X\to Y$ of degree $36$ such that the Jacobian $\on{Pic}^0_{X}$ has an isogeny factor which is independent of the complex structure on $Y$. Analyzing the infinitesimal variation of Hodge structure associated to $H^1(X, \mathbb{Q})$ yields that $f_*{\omega}_X$ is not generically globally generated, and a routine calculation shows that $f$ is Prill exceptional.

\begin{remark}
If $\psi : X'\to Y$ is a finite cover such that $\psi$ factors through
a Prill exceptional cover $f: X\to Y$, then $\psi$ is also Prill exceptional, as
there is an injection $H^0(X, \mathscr{O}(f^{-1}(y)))\to H^0(X', \mathscr{O}(g^{-1}(y)))$. Thus \autoref{theorem:prill-example} can be used to construct Prill exceptional covers of arbitrary degree, by composing with an arbitrary map $X'\to X$.
\end{remark}

\begin{remark}
	\label{remark:}
	Although \autoref{theorem:prill-example} solves Prill's problem, it
would still be extremely interesting to know if there any Prill exceptional
covers where $Y$ has genus $g > 2$.
\end{remark}

\begin{remark}
	\label{remark:}
In this paper, we give a short solution to Prill's problem, a simple
to state question, which has been open since the 1970s.
In a companion paper, \cite[Remark 5.7]{landesmanL:applications-putman-wieland},
we give a much more involved proof
of a slightly weaker result, 
yielding a Prill exceptional cover of a general genus 2 curve. 
That proof relies on heavier machinery, but
we decided to also include it in
\cite{landesmanL:applications-putman-wieland}
as it is nearly automatic from the
tools developed there. 
\end{remark}

\subsection{Acknowledgements}
We especially thank Anand Deopurkar for alerting us to Prill's problem, and Rob Lazarsfeld for encouraging us to write this short note and for giving us useful comments on it.
We would like to thank Ravindra Girivaru, Joe Harris, 
Neithalath Mohan Kumar and Rob Lazarsfeld
for telling us about their previous work on Prill's problem.
We thank Mihnea Popa for a helpful discussion.
Landesman was supported by the National Science Foundation under Award No.
DMS-2102955. Litt was supported by an NSERC Discovery Grant, ``Anabelian methods in arithmetic and algebraic geometry."

\section{The Proof of \autoref{theorem:prill-example}}

To start, we rephrase the condition that $f: X \to Y$ is Prill exceptional in terms of
$f_* \omega_X$ not being generically globally generated, meaning that all of its
global sections lie in a proper subbundle.
This lemma is known to experts, but we recall it for completeness.
\begin{lemma}
	\label{proposition:ggg-and-prill}
	Let $f: X\to Y$ be a finite \'etale morphism of smooth proper connected curves.
	Then $f_*\omega_X$ is not generically globally generated if and only if $f$ is Prill exceptional.
\end{lemma}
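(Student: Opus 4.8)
The plan is to convert the numerical condition defining Prill exceptionality into a statement about the fiberwise evaluation map of $f_*\omega_X$, and then to match that statement with generic global generation via semicontinuity of rank.

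First I would exploit that $f$ is \'etale, so that $\omega_{X/Y}\cong \mathscr{O}_X$ and hence $\omega_X\cong f^*\omega_Y$, while $\mathscr{O}_X(f^{-1}(y))\cong f^*\mathscr{O}_Y(y)$. Combining Serre duality on $X$, the equality $H^i(X,-)=H^i(Y,f_*-)$ for the finite (hence affine) map $f$, and the projection formula, I would compute, for every $y\in Y$,
\begin{align*}
h^0\big(X,\mathscr{O}_X(f^{-1}(y))\big) &= h^1\big(X,\omega_X(-f^{-1}(y))\big)=h^1\big(X,f^*(\omega_Y(-y))\big)\\
&= h^1\big(Y,f_*\omega_X(-y)\big).
\end{align*}
Here the final step uses $f_*f^*(\omega_Y(-y))\cong f_*\omega_X\otimes\mathscr{O}_Y(-y)$, which follows from the projection formula together with $f_*\omega_X=f_*f^*\omega_Y\cong \omega_Y\otimes f_*\mathscr{O}_X$.

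Next I would feed the point $y$ into the twisting sequence $0\to f_*\omega_X(-y)\to f_*\omega_X\to (f_*\omega_X)\otimes\kappa(y)\to 0$ and take the long exact sequence in cohomology. Writing $\ev_y\colon H^0(Y,f_*\omega_X)\to (f_*\omega_X)\otimes\kappa(y)$ for the fiber evaluation, this yields $h^1(Y,f_*\omega_X(-y))=\dim\coker(\ev_y)+h^1(Y,f_*\omega_X)$. Since $h^1(Y,f_*\omega_X)=h^1(X,\omega_X)=h^0(X,\mathscr{O}_X)=1$ as $X$ is connected, combining with the previous display gives the clean formula $h^0(X,\mathscr{O}_X(f^{-1}(y)))=1+\dim\coker(\ev_y)$. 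Consequently $f$ is Prill exceptional if and only if $\ev_y$ fails to be surjective for every $y$, i.e. $f_*\omega_X$ is globally generated at no point of $Y$.

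Finally I would reconcile this pointwise condition with generic global generation. The rank of the morphism of bundles $H^0(Y,f_*\omega_X)\otimes\mathscr{O}_Y\to f_*\omega_X$ is lower semicontinuous, so its maximal value is attained on a dense open set and equals the generic rank $r$; in particular $\rank(\ev_y)\le r$ for all $y$. Thus $\ev_y$ is surjective for some $y$ precisely when $r=\rank(f_*\omega_X)$, that is, precisely when the sections do not all lie in a proper subbundle; equivalently, $\ev_y$ is surjective for \emph{no} $y$ exactly when $f_*\omega_X$ is not generically globally generated. The main obstacle is this last reconciliation: Prill exceptionality is an assertion for every $y$, whereas generic global generation is an assertion at the generic point, and it is precisely the semicontinuity of rank that collapses these two into the single condition $r<\rank(f_*\omega_X)$.
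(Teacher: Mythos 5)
Your proof is correct and follows essentially the same route as the paper's: both reduce Prill exceptionality to the failure of surjectivity of the fiberwise evaluation map of $f_*\omega_X$ via the twisting sequence $0 \to f_*\omega_X(-y) \to f_*\omega_X \to f_*\omega_X \otimes \kappa(y) \to 0$, the equality $h^1(Y, f_*\omega_X) = 1$, Serre duality, and a semicontinuity argument to pass between general $y$ and all $y$. The only cosmetic differences are that you apply Serre duality on $X$ (using $\omega_X \cong f^*\omega_Y$ from \'etaleness) where the paper applies it on $Y$ to identify $h^1(Y, f_*\omega_X(-y))$ with $h^0(Y, (f_*\mathscr{O}_X)(y))$, and that you invoke lower semicontinuity of the rank of the evaluation map where the paper invokes upper semicontinuity of $h^0$.
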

\begin{proof}
We wish to show that $f_* \omega_X$ is not generically globally generated
if and only if $h^0(X, \mathscr{O}_X(f^{-1}(y)))=h^0(Y, (f_* \mathscr O_X)(y)) > 1$ for every point $y$ in $Y$.
Note that $f_* \omega_X$
is not generically globally generated if and only if, for a general $y \in Y$, we have an exact sequence
\begin{equation}
	\label{equation:}
	\begin{tikzcd}[column sep=small, row sep=1 pt]
		0 \ar {r} & H^0(Y, f_* \omega_X(-y)) \ar {r} & H^0(Y, f_*
		\omega_X) \ar {r}{\beta}  & H^0(Y, f_* \omega_X|_y)
		\ar{r}& \qquad\\
		\qquad \ar{r} & H^1(Y, f_* \omega_X(-y)) \ar {r} & H^1(Y, f_* \omega_X) \ar {r} &
		0,
\end{tikzcd}\end{equation}
where $\beta$ is not surjective.
Since  $h^1(Y, f_* \omega_X) = h^1(X, \omega_X)= 1$, the map $\beta$ is not surjective precisely when
$h^1(Y, f_* \omega_X(-y)) > 1$.
By Serre duality, this is equivalent to
$h^0(Y, f_* \mathscr O_X(y)) > 1$ for a general point $y \in Y$. 
This is equivalent to the statement that
$h^0(Y, f_* \mathscr O_X(y)) > 1$ for all points $y \in Y$
by upper semicontinuity of sheaf cohomology.
\end{proof}

Throughout the remainder of the proof, we work in the following setup.
\begin{notation}
	\label{notation:versal}
Consider a diagram of the form
\begin{equation}
	\label{equation:versal}
	\begin{tikzcd}
		\mathscr X \ar {rr}{h} \ar[rd, "\pi'"'] && \mathscr Y \ar
		{ld}{\pi} \\
		& \mathscr M & 
	\end{tikzcd}
\end{equation}
where $\pi$ is a relative smooth proper curve of
genus $g\geq 2$ with geometrically connected fibers, $h$ is finite \'etale, and
$\pi'$ is a smooth proper curve of genus $g'$ with geometrically connected
fibers. Suppose that the map $\mathscr{M}\to\mathscr{M}_g$ induced by $\pi$ is
dominant \'etale. Fix $m\in \mathscr{M}$ and let $X=\mathscr{X}_m,
Y=\mathscr{Y}_m,$ and $f=h|_X$. We refer to the data of
\eqref{equation:versal} as a \emph{versal family of covers of curves of genus $g$}.
\end{notation}
The next proposition shows that, in the above setup, in order to construct Prill
exceptional curves, it is enough to
produce an isotrivial isogeny factor in $\on{Pic}^0_{\mathscr
X/\mathscr M}$.
\begin{proposition}
	\label{proposition:non-ggg}
	With notation as in \autoref{notation:versal}, suppose that the Jacobian
	$\on{Pic}^0_{\mathscr{X}/\mathscr{M}}$ has an isotrivial isogeny factor. Then, $f_*\omega_X$ is not generically globally generated. 
\end{proposition}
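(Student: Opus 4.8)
The plan is to read the hypothesis through the infinitesimal variation of Hodge structure carried by $R^1\pi'_*\mathbb{Q}$ and then convert the resulting cup-product vanishing into a fiberwise statement about the evaluation map of $f_*\omega_X$. Write $\mathbb{H}=R^1\pi'_*\mathbb{Q}$ for the weight-one variation of Hodge structure over $\mathscr{M}$ attached to $\on{Pic}^0_{\mathscr{X}/\mathscr{M}}$, with Hodge bundle $\mathcal{F}^1=\pi'_*\omega_{\mathscr{X}/\mathscr{M}}$ inside $\mathcal{H}=\mathbb{H}\otimes\mathcal{O}_{\mathscr{M}}$ equipped with the Gauss--Manin connection $\nabla$. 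An isotrivial isogeny factor gives a nonzero sub-variation $\mathbb{V}\subseteq\mathbb{H}$ whose Hodge filtration is flat; equivalently, the second fundamental form $\bar\nabla\colon\mathcal{F}^1\to(\mathcal{H}/\mathcal{F}^1)\otimes\Omega^1_{\mathscr{M}}$ vanishes on the sub-bundle $\mathcal{F}^1\cap(\mathbb{V}\otimes\mathcal{O}_{\mathscr{M}})$. Specializing at $m$, and recalling that $\bar\nabla$ in the direction $\xi\in T_m\mathscr{M}$ is cup product with the Kodaira--Spencer class $\kappa(\xi)\in H^1(X,T_X)$, this says $\kappa(\xi)\cup\omega=0$ in $H^1(X,\mathcal{O}_X)$ for every $\omega$ in the subspace $W:=(F^1\mathbb{V})_m\subseteq H^0(X,\omega_X)$ (nonzero, since $\mathbb{V}$ has positive rank) and every $\xi$.

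Next I would use versality to pin down the image of $\kappa$. Since $\mathscr{M}\to\mathscr{M}_g$ is dominant \'etale, the Kodaira--Spencer map of $\mathscr{Y}/\mathscr{M}$ is an isomorphism $T_m\mathscr{M}\xrightarrow{\sim}H^1(Y,T_Y)$, and because $h$ is \'etale the Kodaira--Spencer map of $\mathscr{X}/\mathscr{M}$ is its composition with $f^*\colon H^1(Y,T_Y)\to H^1(X,T_X)$ (using $T_X=f^*T_Y$). Hence the vanishing above reads $f^*\theta\cup\omega=0$ for all $\theta\in H^1(Y,T_Y)$ and all $\omega\in W$. Pairing against $H^0(X,\omega_X)=H^1(X,\mathcal{O}_X)^\vee$ and using that cup product is adjoint to multiplication under Serre duality, this becomes $\langle f^*\theta,\ \omega\cdot\eta\rangle_X=0$ for all such $\theta$ and all $\eta\in H^0(X,\omega_X)$, where $\omega\eta\in H^0(X,\omega_X^{\otimes 2})$. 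Finally, by compatibility of Serre duality with the trace of the \'etale map $f$, one has $\langle f^*\theta,\mu\rangle_X=\langle\theta,\on{Tr}_f(\mu)\rangle_Y$; since the pairing on $Y$ is perfect and $\theta$ ranges over all of $H^1(Y,T_Y)$, I conclude $\on{Tr}_f(\omega\eta)=0$ in $H^0(Y,\omega_Y^{\otimes 2})$ for every $\omega\in W$ and every $\eta\in H^0(X,\omega_X)$.

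To finish, I would read this off fiber by fiber. Fix $0\neq\omega\in W$ and a general $y\in Y$ with fiber $f^{-1}(y)=\{x_1,\dots,x_d\}$ (where $d=\deg f$), chosen to avoid the finite zero locus of $\omega$. Trivializing $\omega_Y$ near $y$ and identifying each $\omega_X|_{x_i}$ with $\omega_Y|_y$ via $f$, every section $s\in H^0(X,\omega_X)$ determines an evaluation vector $v_s=(s(x_1),\dots,s(x_d))\in\mathbb{C}^d$, and in this trivialization $\on{Tr}_f(\omega\eta)(y)=\sum_i\omega(x_i)\eta(x_i)=\langle v_\omega,v_\eta\rangle$ for the standard symmetric bilinear form. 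The identity just proved says $v_\omega$ is orthogonal to every $v_\eta$, i.e. $v_\omega$ lies in the orthogonal complement of $U_y:=\on{span}\{v_\eta:\eta\in H^0(X,\omega_X)\}$; but $v_\omega\neq 0$ because $\omega$ is nonvanishing on the fiber. Since the bilinear form is nondegenerate, $U_y\neq\mathbb{C}^d$, so the evaluation map $H^0(X,\omega_X)\to(f_*\omega_X)_y$ fails to be surjective at the general point $y$, which is exactly the failure of generic global generation of $f_*\omega_X$.

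The main obstacle is the first paragraph: making precise that an isotrivial isogeny factor yields a flat piece of the Hodge bundle, that the second fundamental form there is cup product with Kodaira--Spencer, and that the versality hypothesis forces $\operatorname{im}\kappa=f^*H^1(Y,T_Y)$. Once the cup-product vanishing is transported to the trace identity $\on{Tr}_f(\omega\eta)=0$ --- which requires only the standard Serre-duality adjunction for the \'etale map $f$ --- the remaining fiberwise linear algebra is elementary, the one subtlety being that a single nonzero vector in $U_y^\perp$ obstructs global generation of the \emph{entire} space of sections, not merely of $W$.
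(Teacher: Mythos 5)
Your proposal is correct, and its skeleton coincides with the paper's: both extract from the isotrivial isogeny factor a nonzero sub-Hodge structure of $R^1\pi'_*\mathbb{Q}$ on which the second fundamental form $\overline{\nabla}$ vanishes, translate that kernel at the point $m$ into the statement that the trace pairing $(\omega,\eta)\mapsto \on{tr}_{X/Y}(\omega\otimes\eta)$ kills all global sections against some fixed nonzero $\omega$, and conclude failure of generic global generation. The genuine difference is how the middle identification is established. The paper identifies $\overline{\nabla}_m$ with the global-sections map induced by $\alpha: \eta\mapsto(\nu\mapsto \on{tr}_{X/Y}(\eta\otimes\nu))$ by citing \cite[Theorem 4.1.6 and Lemma A.1.8]{landesmanL:canonical-representations}, whereas you re-derive this from standard facts: Griffiths' description of $\overline{\nabla}$ in a direction $\xi$ as cup product with the Kodaira--Spencer class, the functoriality $\kappa_{\mathscr{X}/\mathscr{M}}(\xi)=f^*\kappa_{\mathscr{Y}/\mathscr{M}}(\xi)$ for the \'etale cover (via $T_X=f^*T_Y$), and the adjunction $\langle f^*\theta,\mu\rangle_X=\langle\theta,\on{tr}_{X/Y}(\mu)\rangle_Y$ between pullback and trace under Serre duality. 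This buys a self-contained argument in place of a black-box citation, at the cost of invoking several standard-but-unproved compatibilities. Your endgame is also more concrete but equivalent: the paper argues at the sheaf level that every global section lies in $\ker q_\eta$, which is proper because $\alpha$ is injective, while you argue fiberwise with evaluation vectors and the nondegenerate bilinear form on $\mathbb{C}^d$; your check that $v_\omega\neq 0$ at a general $y$ plays exactly the role of the paper's injectivity of $\alpha$. You also correctly handle the one real pitfall, namely that the cup-product vanishing must be paired against all of $H^0(X,\omega_X)$ (not merely against $W$) so that the resulting hyperplane obstructs generation by the full space of sections.
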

\begin{proof}
	Let $\mathbb{V}=R^1\pi'_*\mathbb{Q}$ and $\mathscr{V}=\mathbb{V}\otimes \mathscr{O}_\mathscr{M}$. 
	We study the infinitesimal variation of Hodge structure associated to
	$\mathbb{V}$. The Hodge filtration on $\mathscr{V}$ satisfies 
	\begin{align*}
		F^1\mathscr{V}&=\pi'_*\omega_{\mathscr{X}/\mathscr{M}}
		\hspace{.5cm} \text{and} \hspace{.5cm}
\mathscr{V}/F^1\mathscr{V}=R^1\pi'_*\mathscr{O}_{\mathscr{X}}.
	\end{align*}
	The Gauss-Manin connection $\nabla: \mathscr{V}\to \mathscr{V}\otimes \Omega^1_{\mathscr{M}}$ induces an $\mathscr{O}_\mathscr{M}$-linear map $$\overline{\nabla}: \pi'_*\omega_{\mathscr{X}/\mathscr{M}}=F^1\mathscr{V}\to \mathscr{V}/F^1\mathscr{V}\otimes \Omega^1_{\mathscr{M}}=R^1\pi'_*\mathscr{O}_{\mathscr{X}}\otimes \Omega^1_{\mathscr{M}}.$$
	
	The given isotrivial isogeny factor of
	$\on{Pic}^0_{\mathscr{X}/\mathscr{M}}$ yields a nonzero isotrivial sub-$\mathbb{Q}$-Hodge structure $\mathbb{W}\subset \mathbb{V}$. Hence $\overline{\nabla}$ has nontrivial kernel: the kernel contains $F^1(\mathbb{W}\otimes \mathscr{O}_{\mathscr{M}})$.
	
	Restricting $\overline{\nabla}$ to the fiber over $m\in \mathscr{M}$, we
	obtain a map $$\overline{\nabla}_m: H^0(X, \omega_X)\to H^1(X,
	\mathscr{O}_X)\otimes \Omega^1_{\mathscr{M}, m}.$$ As $\mathscr{M}\to
	\mathscr{M}_g$ is \'etale, there is a natural identification of
	$\Omega^1_{\mathscr{M}, m}$ with $\Omega^1_{\mathscr M_g,[Y]} \simeq
	H^0(Y, \omega_Y^{\otimes 2})$. Applying Serre duality, we may view
	$\overline{\nabla}_m$ as a map $$H^0(X, \omega_X)\to \on{Hom}(H^0(X,
	\omega_X), H^0(Y, \omega_Y^{\otimes 2})),$$ or equivalently, as a map 
	\begin{align}
		\label{equation:period-map}
	H^0(Y, f_*\omega_X)\to \on{Hom}(H^0(Y, f_*\omega_X), H^0(Y, \omega_Y^{\otimes 2})). 
	\end{align}
	Then \cite[Theorem 5.1.6 and Lemma
	A.1.8]{landesmanL:canonical-representations},
	applied where the local system $\mathbb V$ in
	\cite[Notation 5.1.1]{landesmanL:canonical-representations}
	is taken to be $h_*\mathbb{Q}$, shows that \eqref{equation:period-map}
	is induced by the map 
\begin{align*}
	\alpha: f_* \omega_X & \rightarrow \underline{\hom}(f_* \omega_X, \omega_Y^{\otimes
	2})\\
	\eta & \mapsto \left( q_\eta: \nu \mapsto \on{tr}_{X/Y}(\eta \otimes \nu) \right)
\end{align*}
by taking global sections. Note that $\alpha$ is injective, since, by 
\cite[Theorem 5.1.6]{landesmanL:canonical-representations},
it is obtained from the isomorphism
$\beta: f_* \mathscr O_X \to \underline{\hom}(f_* \mathscr O_X, \mathscr O_Y)$
(which corresponds to self-duality of the regular representation)
by tensoring $\beta$ with powers of $\omega_Y$ to obtain the map $\alpha$ as the
composition of injective maps
\begin{align*}
	f_* \omega_X \simeq f_* \mathscr O_X \otimes \omega_Y \to
	\underline{\hom}(f_* \mathscr O_X \otimes \omega_Y, \omega_Y^{\otimes
	2}) \simeq \underline{\hom}(f_* \omega_X, \omega_Y^{\otimes
	2}).
\end{align*}
	
	As $\overline{\nabla}$ has nontrivial kernel, the same is true for
	$\overline{\nabla}_m$. That is, there exists nonzero $\eta\in H^0(Y,
	f_*\omega_X)$ such that the nonzero map $q_\eta: f_*\omega_X\to
	\omega_Y^{\otimes 2}$ induces the zero map on global sections. Hence any global
	section of $f_*\omega_X$ lies in the kernel of $q_\eta$. Said another
	way, $f_*\omega_X$ is not generically globally generated.
\end{proof}

It remains to show that there are versal families of covers of curves of genus $2$ so that $\on{Pic}^0_{\mathscr X/\mathscr M}$ has an
isotrivial isogeny factor. We now do this carefully, but note it
can also be extracted from \cite[\S3]{BT:updated-curve-correspondences},
culminating in 
\cite[Example 3.7 and Proposition 3.8]{BT:updated-curve-correspondences}.
We include a proof for completeness, following \cite{BT:updated-curve-correspondences}.

\begin{proposition}
	\label{proposition:bt}
	There exists a versal family of covers of curves of genus $2$, $\mathscr
	X \xrightarrow{h} \mathscr Y \xrightarrow{\pi} \mathscr M,$ 
	with $\pi' = h \circ \pi$
	as in
	\autoref{notation:versal},
such that $h$ has degree $36$, $\on{Pic}^0_{\mathscr{X}/\mathscr{M}}$ has an isotrivial isogeny factor,
and the map $\mathscr M \to \mathscr M_2$ induced by $\pi$ is surjective and \'etale.
\end{proposition}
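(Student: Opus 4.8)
The plan is to realize the family as a connected component of a Hurwitz space of \'etale covers of genus-$2$ curves, and to produce the isotrivial isogeny factor as the fixed part of the associated weight-one variation of Hodge structure. First I would build the base and universal objects: since every genus-$2$ curve is a double cover of $\mathbb{P}^1$ branched at six points, I take $\mathscr{M}$ to be a connected component of the Hurwitz space parametrizing data $(Y\to \mathbb{P}^1,\, X\to Y)$, where $Y\to\mathbb{P}^1$ is the hyperelliptic double cover and $X\to Y$ is a connected degree-$36$ \'etale cover of a prescribed topological type. Forgetting the cover gives a map $\mathscr{M}\to\mathscr{M}_2$; since Hurwitz spaces are finite \'etale over the configuration space of branch points, and $M_{0,6}\to\mathscr{M}_2$ is itself finite \'etale, the composite is finite \'etale, so its image is open and closed and therefore equals the connected space $\mathscr{M}_2$. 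This gives the surjectivity and \'etaleness, and the universal $\mathscr{Y}\to\mathscr{M}$ together with $\mathscr{X}\xrightarrow{h}\mathscr{Y}$ supplies a versal family in the sense of \autoref{notation:versal}.

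The content is to choose the topological type so that $\on{Pic}^0_{\mathscr{X}/\mathscr{M}}$ acquires an isotrivial isogeny factor. I describe $X\to\mathbb{P}^1$ (of degree $72$) by a monodromy homomorphism $\rho\colon \pi_1(\mathbb{P}^1\setminus\{p_1,\dots,p_6\})\to S_{72}$. Because $Y\to\mathbb{P}^1$ is ramified to order $2$ over each $p_i$ and $X\to Y$ is to be \'etale, multiplicativity of ramification indices forces $X\to\mathbb{P}^1$ to be branched exactly over the six $p_i$ with all ramification of order $2$; equivalently, each local monodromy $\rho(\gamma_i)$ is a fixed-point-free involution, the six involutions multiply to the identity, and they generate a transitive subgroup $G\subseteq S_{72}$ (transitivity encoding connectedness of $X$). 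Following \cite{BT:updated-curve-correspondences} I would take $G$ and the six involutions to be a specific explicit choice; Riemann--Hurwitz then pins down $g'=g(X)$, while the Chevalley--Weil formula applied to the Galois closure computes the isotypic decomposition of $H^1(X,\mathbb{Q})$, with each multiplicity depending only on the conjugacy classes of the $\rho(\gamma_i)$ and hence constant over $\mathscr{M}$.

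The decisive step is to exhibit a nonzero piece $\mathbb{W}\subset \mathbb{V}=R^1\pi'_*\mathbb{Q}$, cut out by the $G$-action, on which the monodromy $\pi_1(\mathscr{M})\to \on{GL}(H^1(X,\mathbb{Q}))$ acts through a finite group. This monodromy is the Hurwitz (braid, equivalently mapping-class) action obtained by dragging the six branch points around one another and tracking the induced action on the cohomology of the cover; equivalently, $\mathbb{W}$ is a nonzero invariant subspace of the higher Prym representation attached to $X\to Y$. Because $G$ acts by automorphisms of the (Galois-closure) family, its isotypic components are sub-variations of Hodge structure, so it suffices to check, for the chosen $G$ and involutions, that the Hurwitz action on one such $\mathbb{W}$ factors through a finite quotient. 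Granting this, a polarized variation of Hodge structure with finite monodromy is isotrivial---equivalently, by Deligne's theorem of the fixed part, the monodromy invariants underlie a constant sub-variation---so $\mathbb{W}$ yields an isotrivial sub-Hodge structure and thus an isotrivial isogeny factor of $\on{Pic}^0_{\mathscr{X}/\mathscr{M}}$, as required.

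I expect this last step---producing the explicit degree-$36$ cover whose Prym representation is finite on a nonzero subspace---to be the main obstacle, and it is exactly the point extracted from \cite{BT:updated-curve-correspondences}. The difficulty is that the factor cannot be realized as the Jacobian of a \emph{fixed} quotient curve: after using $\on{PGL}_2$ to fix three of the branch points, any $m$-independent quotient $C\to\mathbb{P}^1$ must be branched only at those three stationary points, and Riemann--Hurwitz with all ramification of order $2$ then forces $g(C)=0$. Hence the isotrivial factor must be a genuinely hidden invariant piece of $H^1(X)$ rather than the pullback of a fixed curve, and locating the group $G$ and the six involutions that manufacture such a piece is the heart of the argument. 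The remaining verifications---connectedness of $X$ and of $\mathscr{M}$, the degree equalling $36$, and the \'etaleness and surjectivity of $\mathscr{M}\to\mathscr{M}_2$---then follow routinely from the transitivity of $G$ and the Hurwitz-space formalism.
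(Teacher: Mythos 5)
Your general framework is reasonable---realizing $\mathscr{M}$ as a Hurwitz-type cover so that $\mathscr{M}\to\mathscr{M}_2$ is surjective \'etale, and reducing the isotrivial isogeny factor to a nonzero sub-variation of Hodge structure $\mathbb{W}\subset R^1\pi'_*\mathbb{Q}$ with finite monodromy, via the theorem of the fixed part. But the entire mathematical content of the proposition lies in the two steps you defer: you never specify the group $G$, the six fixed-point-free involutions, or indeed any topological type of degree-$36$ cover, and you never verify that some nonzero $\mathbb{W}$ has finite Hurwitz monodromy. You write ``I would take $G$ and the six involutions to be a specific explicit choice'' and later ``Granting this\dots''; at that point the argument has reduced the proposition to itself. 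Everything you do establish (finiteness and \'etaleness of the Hurwitz space over the configuration space, surjectivity onto $\mathscr{M}_2$, the local-monodromy bookkeeping) is the routine part; the existence of a topological type with an invariant piece of $H^1$ is exactly what needs proof, and no amount of Chevalley--Weil bookkeeping over a single fiber can produce it, since isotypic multiplicities alone do not detect finiteness of the monodromy of $\pi_1(\mathscr{M})$ on a summand.

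Moreover, the heuristic you offer for why the factor must be a ``genuinely hidden invariant piece of $H^1(X)$'' is contradicted by the actual construction, which realizes the isotrivial factor precisely as the image of a dominant map from $\mathscr{X}$ to a family of curves. The paper (following \cite{BT:updated-curve-correspondences}) builds a relative elliptic curve $\mathscr{E}\to\mathscr{M}$ as a double cover of the hyperelliptic $\mathbb{P}^1$-bundle branched over four of the six Weierstrass sections, with origin $t_5$ lying over the fifth; takes $\mathscr{C}_1$ to be the normalization of $\mathscr{Y}\times_{\mathbb{P}}\mathscr{E}$ (\'etale of degree $2$ over $\mathscr{Y}$ by Abhyankar's lemma), then $\mathscr{C}_2=\mathscr{C}_1\times_{\mathscr{E},[\times 3]}\mathscr{E}$ (\'etale of degree $9$); and finally takes $\mathscr{X}$ to be the normalization of $\mathscr{C}_2\times\mathscr{E}_0$ over $\mathbb{P}(q_*\mathscr{O}_{\mathscr{E}}(2t_5))\simeq\mathscr{E}/\{\pm 1\}$, where $\mathscr{E}_0$ is the double cover branched over the image $\mathscr{D}$ of the nontrivial $3$-torsion. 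The point your heuristic misses is that, by the Hesse pencil, the image of $E[3]$ in $E/\{\pm 1\}\simeq\mathbb{P}^1$ has cross-ratios independent of the elliptic curve $E$; so although $\mathscr{E}$ varies with the genus-$2$ curve, the family $\mathscr{E}_0$ is isotrivial, and the dominant map $\mathscr{X}\to\mathscr{E}_0$ yields the isogeny factor with no Hodge-theoretic finiteness verification at all. Your Riemann--Hurwitz argument only excludes quotients fibered over the \emph{hyperelliptic} $\mathbb{P}^1$ with branch points among the six moving Weierstrass images; it says nothing about quotients fibered over the auxiliary $\mathbb{P}^1=\mathscr{E}/\{\pm 1\}$, which is where the construction lives. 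Identifying that auxiliary rational curve, over which the relevant branch data stops moving, is the missing idea.
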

\begin{remark}
	\label{remark:}
	In fact, \autoref{proposition:bt} has a straightforward generalization
	to higher genus:
	Let $\mathscr H_g$ denote the moduli stack of
	hyperelliptic curves of genus $g$.
	There exists a family $\mathscr X \xrightarrow{h} \mathscr Y
	\xrightarrow{\pi}\mathscr M$ so that $\pi$ is a family of smooth proper
	genus $g$ hyperelliptic curves with geometrically connected fibers, $\pi \circ h$ is a family of smooth
	proper curves with geometrically connected fibers, $h$ is finite \'etale of degree $36$,
	$\on{Pic}^0_{\mathscr{X}/\mathscr{M}}$ has an isotrivial isogeny factor,
	and $\mathscr M \to \mathscr H_g$ is a surjective \'etale map.
\end{remark}

\begin{proof}[Proof of \autoref{proposition:bt}]
	\begin{figure}[h!]
	\begin{equation}
	\label{equation:}
	\begin{tikzcd} [row sep=14 pt]
		\mathscr{Y} \ar {d} & \mathscr{C}_1 \ar {d} \ar{l} &
		\mathscr{C}_2 \ar{l} \ar{d} & \mathscr X \ar{dd}
		\ar{l} \\
		\mathbb P  & \mathscr E\ar{l}{\phi} & \mathscr E \ar{l}{[\times 3]} \ar{d}{\alpha}
		&  \\
		& & \mathbb P ( q_*\mathscr O_{\mathscr E}(2 t_5)) & \mathscr
		E_0 \ar{l}
\end{tikzcd}\end{equation}
	\caption{A diagram depicting the relevant curves in the proof of
	\autoref{proposition:bt}.}
	\label{figure:bt}
\end{figure}

First, we construct a particular scheme $\mathscr M$, which has a surjective \'etale map to
$\mathscr M_2$. Let $\mathscr{M}'$ be the $S_6$-cover of $\mathscr{M}_2$
parametrizing orderings of Weierstrass points on the universal curve, and let
$\psi': \mathscr{Y}'\to \mathscr{M}'$ be the pullback of the universal curve to $\mathscr{M}'$. 
Let $\mathbb{P}':=\mathbb{P}(\psi'_*\omega_{\mathscr{Y}'/\mathscr{M}'})$, so that
there is a natural $2$-to-$1$ map $\mathscr{Y}'\to \mathbb{P}'$, ramified over the
images of $6$ disjoint sections $s_1', \cdots, s_6': \mathscr M' \to \mathscr Y'$. 
The statement of \autoref{proposition:bt}
is insensitive to replacing $\mathscr{M}'$ with a Zariski-open cover, and replacing $\mathscr
Y', \mathbb P', s_i'$ by their pullbacks to this cover, and we do so freely.
Zariski-locally on $\mathscr{M}'$, we may construct the double cover $p': \mathscr{E}'\to \mathbb{P}'$
branched over the images of $s_1', \ldots, s_4'$ in
$\mathbb P'$.
Now, let $\mathscr M :=(p')^{-1}(s_5'(\mathscr{M}'))$ denote the finite \'etale
double cover of $\mathscr M'$
where one additionally marks a point of $\mathscr{E}'$ mapping to the image of $s_5'$ under $p'$. 
Let $\mathbb P, \mathscr Y, \mathscr E$ denote the
pullbacks of $\mathbb P', \mathscr Y', \mathscr E'$ along $\mathscr M \to \mathscr
M'$, and let $q:\mathscr{E}\to \mathscr{M}$ be the natural map. By construction,
$q$ has a section, call it $t_5$, whose image lies over the image of $s_5'$. 
We consider $(\mathscr E, t_5)$ as an elliptic curve with identity section
$t_5: \mathscr M \to \mathscr E$.

The next several steps in the proof construct a sequence of three finite \'etale
covers of $\mathscr Y$, the last of which maps to an isotrivial elliptic
curve $\mathscr E_0$, as in \autoref{figure:bt}.
Let $\mathscr{C}_1$ be the normalization of the fiber product $\mathscr{Y}\times_\mathbb{P} \mathscr{E}$. 
We claim $\mathscr{C}_1$ is finite \'etale over $\mathscr{Y}$. 
To see this,
observe $\mathscr Y \to \mathbb P$ is branched to order $2$ at every
point in the branch locus of the map $\phi: \mathscr{E}\to \mathbb{P}$ obtained by pulling back $p'$.
Therefore, $\mathscr C_1 \to \mathscr Y$ is finite \'etale
by a relative version of Abhyankar's
lemma \cite[Expose\'e XIII, Proposition 5.5]{sga1}.

Next, define $\mathscr{C}_2:= \mathscr{C}_1
\times_{\mathscr E, [\times 3]} \mathscr E$, where the map $[\times 3]: \mathscr
E \to \mathscr E$ is multiplication by $3$ on the relative elliptic curve, and where
we use $t_5$ as the identity section of the elliptic curve $\mathscr E$.
Because $[\times 3]$ is finite \'etale,
$\mathscr{C}_2$ is finite \'etale over $\mathscr C_1$, hence over $\mathscr Y$.

We next construct one further finite \'etale cover $\mathscr X$ of $\mathscr C_2$.
Let $\alpha : \mathscr E \to \mathbb P (q_*\mathscr O_{\mathscr E}(2 t_5))$ denote the map induced by the
complete linear system associated $2t_5$. The map $\alpha$ is a double cover
ramified along the $2$-torsion of the relative elliptic curve $(\mathscr E,
t_5)$ over $\mathscr M$.
Let $\mathscr{D}=\alpha(\mathscr{E}[3]\setminus \on{im}(t_5))$, and note that
$\mathscr{D}$ is finite \'etale of degree $4$ over $\mathscr{M}$. 
Zariski-locally on $\mathscr{M}$, we can and do construct the 
double cover $\mathscr E_0 \to \mathbb P
( q_*\mathscr O_{\mathscr E}(2 t_5))$, branched over $\mathscr{D}$,
which is a family of 
genus $1$ curves. We replace $\mathscr{M}$ with the above Zariski cover, used to
construct $\mathscr E_0$.
Then, $\mathscr{X}$, defined as the normalization of $\mathscr{E}_0
\times_{\mathbb P (q_*\mathscr O_{\mathscr E}(2 t_5))} \mathscr C_2$, has a dominant map to
$\mathscr{E}_0$.
To conclude the proof, it is enough to show $\mathscr E_0$ is isotrivial and
$\mathscr{X} \to \mathscr{C}_2$ is finite \'etale of degree $36$.
Indeed, since $\mathscr X \to \mathscr E_0$ is a surjective map, $\on{Pic}^0_{\mathscr X/\mathscr M}$ has $\mathscr E_0$ as an
isogeny factor, which we will show to be isotrivial.

First, $h$ is a composite of $3$ maps of degrees $2,9$, and $2$, so $h$ has degree $36$.

Next, we claim $\mathscr{X}\to \mathscr{C}_2$ is finite \'etale.
Since $\mathscr C_1 \to \mathscr E$ is
branched to order $2$ over $t_5$, $\mathscr C_2 \to \mathscr E$ is
branched to order $2$ over $\mathscr E[3]$.
Hence,
$\mathscr{C}_2\to \mathbb P
( q_*\mathscr O_{\mathscr E}(2 t_5))$ is evenly branched over $\mathscr{D}$,
meaning that each preimage of any point in $\mathscr D$ has even ramification order.
Therefore, $\mathscr{X}\to \mathscr{C}_2$ is finite \'etale
by relative Abhyankar's
lemma \cite[Expose XIII, Proposition 5.5]{sga1}.

It remains to show $\mathscr{E}_0$ is isotrivial.
This follows from the computation preceding \cite[Lemma
4.3]{BT:updated-curve-correspondences}, which we now recall.
Consider the Hesse pencil $$E_\lambda: x^3+y^3+z^3+\lambda xyz=0,$$ 
where we view $E_\lambda$ as a family of elliptic curves with identity point
$[1:-1:0] \in E_\lambda \subset \mathbb P^2$.
Projecting $E_\lambda$
away from the point $[1:-1:0]$, we obtain a double cover $E_\lambda\to
\mathbb{P}^1$, given as the quotient $E_\lambda\to E_\lambda/\{\pm
1\}\simeq \mathbb{P}^1$.
Since $E_\lambda[3]$ is precisely the base locus of the Hesse pencil,
the image of $E_\lambda[3]$ is independent of $\lambda$.

We next claim that any elliptic curve over $\mathbb C$ is isomorphic to $E_\lambda$ for some
$\lambda \in \mathbb C$.
Note that the family $(E_\lambda)_{\lambda \in \mathbb P^1}$ defines a relative 
curve over $\mathbb P^1_\lambda$,
whose fiber at $\lambda = \infty$ is the reducible nodal curve $xyz = 0$.
The other singular members of the Hesse pencil are also nodal curves with three
irreducible components.
This family therefore corresponds to a map $\upsilon: \mathbb P^1 \to \overline{\mathscr
M}_{1,1}$, where $\overline{\mathscr M}_{1,1}$ is the moduli stack of elliptic
curves. The map is surjective because it induces a nonconstant map from $\mathbb
P^1$ to the coarse moduli space of $\overline{\mathscr M}_{1,1}$.
Therefore, every smooth elliptic curve over the complex numbers is isomorphic to
$E_\lambda$ for some $\lambda \in \mathbb C$.

Since any two elliptic curves $E$ and $E'$ appear as members of the Hesse pencil, the
images $E[3] \to E/\{\pm 1\} \simeq \mathbb P^1$ and $E'[3] \to E'/\{\pm 1\}
\simeq \mathbb P^1$ have
the same cross ratios.
Hence, any two fibers of 
the pair $(\mathbb P ( q_*\mathscr O_{\mathscr E}(2 t_5)), \mathscr{D})$ are
isomorphic,
so any two fibers of
$\mathscr E_0$
are isomorphic. Hence, $\mathscr E_0$ is isotrivial.
\end{proof}

We now straightforwardly combine the above results to
prove \autoref{theorem:prill-example}.
\begin{proof}[Proof of \autoref{theorem:prill-example}]
	By \autoref{proposition:bt}, there is a family $h: \mathscr X \to \mathscr
	Y$ of degree $36$ finite \'etale covers of genus $2$ curves over $\mathscr M$, where the induced
	map $\mathscr M \to \mathscr M_2$ is surjective \'etale and
	$\on{Pic}^0_{\mathscr X/\mathscr M}$ has an isotrivial isogeny factor.
	For any point $m \in \mathscr M$, let $f: X \to Y$ 
	be the fiber of $h :
	\mathscr X \to \mathscr Y$ over $m$.
	By \autoref{proposition:non-ggg},
	$f_* \omega_X$ is not generically globally
	generated. 
	By \autoref{proposition:ggg-and-prill}, $X \to Y$ is Prill exceptional.
\end{proof}

\bibliographystyle{alpha}
\bibliography{bibliography-mcg-hodge-theory}

\end{document}